\numberwithin{equation}{section}
\newtheorem{theorem}{Theorem}[section]
\newtheorem{lemma}[theorem]{Lemma}
\newtheorem{remark}{Remark}[section]
\newtheorem{proposition}[theorem]{Proposition}
\newcommand{\dif}{\mathrm{d}}
\newcommand{\di}{\mathrm{div}}
\newcommand{\SUM}[3]{\sum\limits_{{#1}={#2}}^{#3}}
\newcommand{\mycomment}[1]{} 
\newcommand{\mycor}[1]{} 
\begin{document}
\title[Compressible Navier-Stokes equations with Maxwell's law]{Global solutions and uniform convergence stability \\
for compressible Navier-Stokes equations \\
with Oldroyd-type constitutive law}
\author{Na Wang, Sebastien Boyaval  and Yuxi Hu}
 \thanks{\noindent Na Wang,   School of Applied Science,
Beijing Information Science and Technology University, Beijing, 100192, P.R. China, wn\_math@126.com\\
\indent Sebastien Boyaval, Laboratoire d'Hydraulique Saint-Venant (LHSV), Ecole nationale des ponts et chauss\'ees, Institut Polytechnique de Paris, EDF R\&D, Chatou, France \& Inria, sebastien.boyaval@enpc.fr\\
\indent  Yuxi Hu, Department of Mathematics, China University of Mining and Technology, Beijing, 100083, P.R. China, yxhu86@163.com\\
 }
\begin{abstract}
We consider a class of physically-relevant one-dimensional isentropic compressible Navier-Stokes equations with viscoelastic constitutive law of Oldroyd-type. By establishing uniform a priori estimates (with respect to relaxation time), we show global existence of smooth solutions with small initial data. Moreover, we get global-in-time convergence of the system towards the classical isentropic compressible Navier-Stokes equations.
 \\[2em]
{\bf Keywords}: Viscoelastic compressible fluids; Oldroyd-type constitutive law; hyperbolic systems; global solutions; relaxation limit \\
 {\bf  2020 MSC}: 35 L 03, 35 Q 35, 76 N 10
\end{abstract}
\maketitle
\section{Introduction}
In this paper, we investigate the Cauchy problem for 
a class of physically-relevant hyperbolic systems that close, with a viscoelastic constitutive law of Oldroyd-type, the compressible Navier-Stokes equations accounting for mass and momentum conservation. The equations read
\begin{align}\label{1.1}
\begin{cases}
\rho_t+(\rho u)_x=0,\\
(\rho u)_t+(\rho u^2)_x+p(\rho)_x=S_x,\\
\tau  (S_t+ u S_x-2aS u_x)+S=\mu u_x,
\end{cases}
\end{align}
for the three 
variables $\rho$, $u$, $S$ defining respectively the fluid density, velocity and stress on $(t, x)\in (0, +\infty)\times \mathbb R$.
The pressure $p$ is assumed to satisfy the usual $\gamma$-law, $p(\rho)=B \rho^\gamma$ where $\gamma>1$ denotes the adiabatic index and $B$ is any positive constant. Without loss of generality, we assume 
$B=1$ in the sequel.
The constant $\tau>0$ parametrizes the viscoelastic constitutive law: it defines a characteristic 
time-scale corresponding with the relaxation of the stress $S$ to a viscous law $\mu u_x$.

Crucially here, $a\in[-1, 1]$ is a \emph{non-zero} kinematic constant so that \eqref{1.1} is actually a one-dimensional (1-d) version of physically-relevant viscoelastic equations.
When the viscosity $\mu>0$ is a constant, the constitutive relation $\eqref{1.1}_3$ can then be viewed as a 1-d version of Oldroyd-type models describing viscoelastic fluids (see \cite{BP, LM, MT, RHN, ZZZ} and the references  therein) such as
\begin{align}\label{1.2}
\tau (S_t+u\cdot \nabla S+g_a(S,\nabla u))+S=\mu_1 (\nabla u+\nabla u^T-\frac{2}{n} \di u I_n)+\mu_2 \di u I_n
\end{align}
where $g_a(S,\nabla u):= S W(u)-W(u)S-a(D(u)S+S D(u))$, $D(u)=\frac{1}{2}(\nabla u+(\nabla u)^T)$ and $W(u)=\frac{1}{2}(\nabla u -(\nabla u)^T)$, while $I_n$ denotes the $n$-th order identity matrix when the ambient space dimension is $n$, while $\mu_1$ and $\mu_2$ are shear and bulk viscosities respectively. Precisely, the equation \eqref{1.2} is either the Upper-Convected or Lower-Convected Maxwell model resulting from the seminal work \cite{Oldroyd1950} of Oldroyd. The positive parameter $\tau$ is the relaxation time describing the time lag in the response of the stress tensor to velocity gradient. The constitutive equation \eqref{1.2} has wide application in the field of complex fluids, such as macromolecular or polymeric fluids. 

Moreover, when $\mu(\rho;\tau)=2\rho G\tau$ is a functional in $\rho$ parametrized by $\tau>0$, then $\eqref{1.1}_3$ when $a=\frac12$ is a 1-d version without pressure of the model recently proposed in \cite{Boyaval2021} to extend to viscoelasticity a standard symmetric-hyperbolic system 
for the large-strain mechanics 
of compressible Neo-Hookean materials. This can be seen on defining the stress $S=G\left(\frac{A}\rho-\rho\right)$
as a functional of $\rho$ and $A$ parametrized by an elastic modulus $G>0$ 
while the constitutive law equivalently reads
$$
\tau(A_t+ u A_x)+A=\rho^2 \,.
$$
Note for future purposes that, in the latter case, it holds $\rho (2\tau a S +\mu) = \tau G (A/\rho+\rho)$. We shall come back to the particular case $\mu(\rho;\tau)=2\rho G\tau$ at the end of the paper in Section~\ref{sec:proof}, and by then we standardly assume that $\mu>0$ is a constant.

When coupled with the constitutive relation \eqref{1.2} \emph{linearized}, the compressible Navier-Stokes equations have been widely studied both in 1-d and multi-d cases. Yong \cite{YW14} first studied the 3-d isentropic Navier-Stokes equations with revised Maxwell law (in its linear form) and obtained a local well-posedness theory, plus a local relaxation limit. This results were extended by Hu and Racke \cite{HR2017} to a non-isentropic case, and then by Peng \cite{PY2020} to yet more general cases. The blow-up phenomenon was studied by Hu and Wang \cite{HW2019, HW2020}. 

For the nonlinear versions of \eqref{1.2}, there are only few special results due to the complex mathematical structure. In the particular 1-d case \emph{co-rotational} (which assumes $a=0$), Hu and Racke \cite{HRJDE} obtained the global-in-time existence of smooth solutions for a non-isentropic system, and they showed the local relaxation limit. See also \cite{PZ2020} for a global weak relaxation limit. When considering 3-d viscoelastic flows, the global-in-time existence of smooth solutions could be shown for a \emph{generalized} system with \emph{additional diffusion} of the stress variable \cite{ZY2020}. But to our knowledge, even in the particular 1-d case, there are no rigorous result showing convergence to the compressible Navier-Stokes equations when $a\neq0$. The aim of this paper is to fill this gap.

For convenience, we rewrite the system \eqref{1.1} in Lagrangian coordinates:
 \begin{align}\label{1.3}
\begin{cases}
v_t-u_x=0,\\
u_t+p(v)_x=S_x,\\
\tau (S_t-\frac{2aS}{v}u_x)+S=\mu\frac{u_x}{v},
\end{cases}
\end{align}
where $v=\frac{1}{\rho}$ denotes the specific volume per unit mass.

We are interested in the Cauchy problem to system \eqref{1.3}   for the functions
\begin{align*}
(v, u, S): [0, +\infty) \times \mathbb R \rightarrow (0, \infty)\times \mathbb R \times \mathbb R
\end{align*}
with initial conditions
\begin{align} \label{1.4}
(v, u, S)(0,x)=(v_0, u_0, S_0)(x).
\end{align}

Note that when $\tau=0$, the system \eqref{1.3} is reduced to classical isentropic Navier-Stokes equations
\begin{align}\label{CNS}
\begin{cases}
v_t=u_x,\\
u_t+p(v)_x=\left(\frac{\mu u_x}{v}\right)_x,\\
\end{cases}
\end{align}
 for which the global large solution (away from vacuum) was  already known, see \cite{Kanel1968}.
 But the methods there can not be applied to the relaxed system due to the essential change of structure, i.e., from hyperbolic-parabolic to pure hyperbolic system. On the other hand, it has been show that
 solutions to the relaxed system may blowup in finite time for some large data, see \cite{HW2019}. 
 Therefore, a global defined  smooth solutions should  not be expected for large data.

Let us introduce some notations. 
We denote $W^{m,p}\equiv W^{m,p}(\mathbb R),\,0\le m\le
\infty,\,1 \le p\le \infty$ the usual Sobolev space with norm $\| \cdot \|_{W^{m,p}}$, $H^m$ and $L^p$
stand for $W^{m,2}$ resp. $W^{0,p}$. 
We standardly denote $\|(f,g)\|_{W^{m,p}}^2=\|f\|_{W^{m,p}}^2+\|g\|_{W^{m,p}}^2$ the (squared) norm of 
a set 
of elements in $W^{m,p}$ (like $f\in W^{m,p}$, $g\in W^{m,p}$ here).
We recall the continuous embedding $W^{2,2}\subset C^1(\mathbb R)$. 

Our main results are stated as follows. 
\begin{theorem}\label{th1.1}
Given $a\in[-1,1]$, $\mu>0$ and $\tau\in(0,1)$,
there exists a constant $C_0>0$ 
such that for any $\epsilon_0\in \left(0, \min\{1,\frac1{2C}\} \min\{\delta,\frac1{(2C)^2}\} \right)$
where $\delta$ and $C$ are the constants defined in Prop.~\ref{pro2.2} below,
the initial value problem \eqref{1.3}-\eqref{1.4} with initial conditions satisfying
\begin{align}
\label{assumption1.6}
E_0:=\|(v_0-1, u_0, \sqrt{\tau}S_0)\|_{H^2}^2 < \epsilon_0
\end{align}
has a unique global solution $(v, u, S)\in C^1([0,\infty)\times \mathbb R)$ satisfying
\begin{align}
\label{result1.7}
(v-1, u, S)\in C([0, \infty), H^{2}(\mathbb R)) \cap C^1([0, \infty), H^{1}(\mathbb R))
\end{align}
\begin{align}
\label{result1.8}
\sup_{0\le t <\infty}\|(v-1, u, \sqrt{\tau}S)(t, \cdot)\|_{H^2}^2
+ \int_0^\infty \left( \|(v_x, u_x)(t,\cdot)\|_{H^1}^2+ \|S(t,\cdot)\|_{H^2}^2 \right) \dif t \le C_0 E_0
\end{align}
where $C_0>0$ 
is independent of $\tau$ and of the initial data.
\end{theorem}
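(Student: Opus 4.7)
The plan is a classical local-existence plus bootstrap argument, with the hyperbolic structure of \eqref{1.3} (for $\tau>0$ fixed) handled by standard theory and the global extension resting on the uniform-in-$\tau$ a priori estimate of Prop.~\ref{pro2.2}. For fixed $\tau\in(0,1)$, after multiplying $\eqref{1.3}_3$ by $\tau^{-1}$ the system is quasilinear symmetric hyperbolic in $(v,u,S)$, so Kato--Majda theory produces a unique local-in-time solution of regularity \eqref{result1.7} on some maximal interval $[0, T_{\max})$, provided $v$ stays strictly positive (which holds for small $E_0$ via $H^2\hookrightarrow L^\infty$). To force $T_{\max}=+\infty$ I would run a continuity argument: with $\delta$ and $C$ as in Prop.~\ref{pro2.2}, pick $\epsilon_0$ so small that $CE_0 \le C\epsilon_0 < \delta/2$; as long as $\sup_{[0,t]}\|(v-1,u,\sqrt{\tau}S)\|_{H^2}^2\le\delta$ the proposition yields the strictly smaller bound $\le CE_0<\delta/2$, so continuity in $t$ propagates the smallness indefinitely and rules out finite-time blow-up. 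Passing $T\to\infty$ in the a priori estimate gives \eqref{result1.8}.

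The substance of the theorem lies in Prop.~\ref{pro2.2}, whose proof I expect to be organized around three coupled ingredients applied at zeroth, first, and second spatial-derivative levels. First, for the basic $L^2$ energy, I would multiply $\eqref{1.3}_1$ by $p(1)-p(v)$, $\eqref{1.3}_2$ by $u$, and $\eqref{1.3}_3$ by $S/\mu$, then sum and integrate to obtain schematically
\[
\frac{\dif}{\dif t}\int\!\Bigl(\eta(v)+\tfrac{u^2}{2}+\tfrac{\tau S^2}{2\mu}\Bigr)\dif x + \frac{1}{\mu}\int S^2\,\dif x = \int\!\frac{Su_x(1-v)}{v}\,\dif x + \frac{2a\tau}{\mu}\int\!\frac{S^2 u_x}{v}\,\dif x,
\]
where $\eta$ is the convex entropy with $\eta'(v)=p(1)-p(v)$; the right-hand side is a higher-order remainder absorbed under smallness. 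Second, to extract $L^2_{tx}$ dissipation for $u_x$ uniformly in $\tau$, I would rewrite the constitutive law as $\mu u_x/v = S + \tau S_t - 2a\tau S u_x/v$, test against $u_x$, and handle the critical $\tau\int S_t u_x\,\dif x$ by integration by parts in time combined with $u_{xt}=S_{xx}-p(v)_{xx}$ (from the momentum equation), producing remainders at already-controlled levels with constants independent of $\tau$. Third, dissipation for $v_x$ comes from testing $\eqref{1.3}_2$ against $-v_x$ and using $v_t=u_x$, which gives
\[
-\int p'(v) v_x^2\,\dif x = \int u_x^2\,\dif x + \frac{\dif}{\dif t}\int u v_x\,\dif x - \int S_x v_x\,\dif x,
\]
with $-p'(v)\ge\gamma/2$ near $v=1$. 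The first- and second-derivative levels of the estimate then iterate these three ingredients after differentiating \eqref{1.3} in $x$ once and twice.

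The main obstacle will be the nonlinear Oldroyd-type term $2aSu_x/v$ in $\eqref{1.3}_3$ for $a\neq 0$, which is the genuine novelty over the $a=0$ case \cite{HRJDE} and over the linear Maxwell-type models of \cite{YW14,HR2017,PY2020}. Differentiating it twice and testing against $S_{xx}$ produces borderline trilinear contributions carrying up to three derivatives on $(u,S)$ (for instance $\int S_{xx}(Su_x/v)_{xx}\,\dif x$), which must be integrated by parts carefully and absorbed into the dissipation $\int(S_{xx}^2+u_{xx}^2)\,\dif x$ combined with smallness\,---\,crucially without losing any factor of $\tau$, since no multiplier may produce a $\tau^{-1}$ constant. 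Maintaining uniformity in $\tau\in(0,1)$ is the structural discipline that dictates every multiplier choice and forbids treating the system as a plain hyperbolic one with $\tau$-dependent constants; it is precisely this constraint that forces the parabolic-type dissipation of $u_x$ and $v_x$ to be painstakingly extracted from the dissipation of $S$ alone.
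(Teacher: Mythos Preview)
Your proposal is correct and follows essentially the paper's strategy: local existence via symmetric-hyperbolic theory, then a continuation argument resting on Prop.~\ref{pro2.2}. One small imprecision in your bootstrap: the threshold cannot simply be $\delta$, since \eqref{2.4} still carries the factor $E^{1/2}(t)\int_0^t\mathcal D$, which must be absorbed on the left before you can read off $E(t)\le C E_0$. The paper first fixes an intermediate $\epsilon<\min\{\delta,(2C)^{-2}\}$ so that $C\epsilon^{1/2}<\tfrac12$, and then bootstraps with the threshold $\epsilon$ rather than $\delta$; this is exactly why the statement of the theorem carries both $\delta$ and $(2C)^{-2}$ in the smallness condition on $\epsilon_0$.

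Your sketch of Prop.~\ref{pro2.2} is also close to the paper's, but one technical difference is worth flagging. For the stress equation you multiply by $S/\mu$ (and presumably $\partial_x^\alpha S/\mu$ at higher orders), which leaves the Oldroyd term $2a\tau S u_x/v$ as a trilinear remainder that you anticipate becomes ``borderline'' after two derivatives. The paper instead multiplies $\eqref{1.3}_3$ by $\frac{v}{\mu}S$ at zeroth order and, crucially, by $\frac{v}{2a\tau S+\mu}\,\partial_x^\alpha S$ at orders $\alpha=1,2$. This multiplier absorbs the coefficient $(2a\tau S+\mu)/v$ in front of $\partial_x^{\alpha+1}u$ in the differentiated equation, so the principal cross term becomes exactly $\partial_x^{\alpha+1}u\cdot\partial_x^\alpha S$, which cancels against $\partial_x^{\alpha+1}S\cdot\partial_x^\alpha u$ from the momentum equation after integration by parts. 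The Oldroyd nonlinearity is thus handled at the level of the symmetrizer $A^0(W)$ rather than as a commutator, and the difficulty you foresee never materializes: only lower-order Moser-type remainders survive, all bounded by $CE^{1/2}(t)\mathcal D(t)$. Your multipliers would also close, but with more bookkeeping; the paper's choice is the cleaner route and explains why the $a\neq0$ case is no harder here than $a=0$.
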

Based on uniform estimates of solutions, we have the following convergence theorem.
\begin{theorem}\label{th1.2}
(Global weak convergence).
Given $a\in[-1,1]$ and $\mu>0$, 
let $(v^\tau, u^\tau, S^\tau)$ be global solutions obtained in Theorem \ref{th1.1} 
for 
relaxation parameters $\tau\in(0,1)$ and initial states $(v^\tau_0, u^\tau_0, S^\tau_0)$. 
Then there exists 
$(v^0,  u^0)\in L^\infty(\mathbb R^+; H^2(\mathbb R)) \cap C^0(\mathbb R^+; H^1(\mathbb R))\cap C^1(\mathbb R^+; L^2(\mathbb R))$ such that, as $\tau\rightarrow 0$
\begin{align}
(v^\tau, u^\tau) \rightharpoonup ( v^0,  u^0) \qquad \mathrm{weakly}-\ast \quad \mathrm{in}\quad L^\infty(\mathbb R^+; H^2(\mathbb R)), \label{1.17}
\end{align}
up to subsequences,
where $(v^0, u^0)$ is solution to the one-dimensional isentropic compressible Navier-Stokes equations \eqref{CNS}
with initial value $(v^0_0, u^0_0)$ weak limit of $(v^\tau_0, u^\tau_0)$. 
In \eqref{CNS}, $\mu\frac{u^0_x}{v^0}$ coincides, for almost every (a.e.) $t>0$,
with $S^0\in L^2(\mathbb R^+; H^2(\mathbb R)) \cap L^\infty(\mathbb R^+; H^1(\mathbb R))$  
the weak limit of $S^\tau$. 
\end{theorem}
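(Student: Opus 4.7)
The plan is to extract weak-$*$ limits from the uniform bounds of Theorem \ref{th1.1}, upgrade them to strong local convergence via an Aubin--Lions argument, and then pass to the limit in each equation of \eqref{1.3}; the Oldroyd quadratic correction will be dispatched by exploiting that every $\tau$-dependent term carries at least a factor $\sqrt{\tau}$ multiplying a bounded quantity.

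First I would collect, from \eqref{result1.8} and independently of $\tau\in(0,1)$, the bounds: $(v^\tau-1,u^\tau)$ bounded in $L^\infty(\mathbb{R}^+;H^2)$, $\sqrt{\tau}\,S^\tau$ bounded in $L^\infty(\mathbb{R}^+;H^2)$, $(v_x^\tau,u_x^\tau)$ bounded in $L^2(\mathbb{R}^+;H^1)$, and $S^\tau$ bounded in $L^2(\mathbb{R}^+;H^2)$. Banach--Alaoglu then furnishes a subsequence along which $(v^\tau-1,u^\tau)\rightharpoonup(v^0-1,u^0)$ weakly-$*$ in $L^\infty(\mathbb{R}^+;H^2)$ and $S^\tau\rightharpoonup S^0$ weakly in $L^2(\mathbb{R}^+;H^2)$, which yields \eqref{1.17}. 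Because $\epsilon_0$ is small, the embedding $H^2(\mathbb{R})\hookrightarrow C^1(\mathbb{R})$ keeps $v^\tau$ uniformly bounded away from $0$, so $1/v^\tau$ remains well controlled.

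Next, to pass limits in nonlinearities such as $p(v^\tau)$ and $u_x^\tau/v^\tau$, I would promote weak to strong convergence on cylinders $[0,T]\times[-R,R]$. From \eqref{1.3}, $v_t^\tau=u_x^\tau$ is bounded in $L^\infty(\mathbb{R}^+;H^1)$, while $u_t^\tau=-p(v^\tau)_x+S_x^\tau$ is bounded in $L^2(\mathbb{R}^+;H^1)$ (the pressure piece being $p'(v^\tau)v_x^\tau$, and $S_x^\tau$ controlled via $S^\tau\in L^2H^2$). Combined with the compact embedding $H^2(-R,R)\hookrightarrow\hookrightarrow H^1(-R,R)$, the Aubin--Lions lemma yields $(v^\tau,u^\tau)\to(v^0,u^0)$ strongly in $C([0,T];H^1(-R,R))$, hence a.e.\ on $\mathbb{R}^+\times\mathbb{R}$ after a diagonal extraction in $(T,R)$. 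In particular $p(v^\tau)\to p(v^0)$ and $u_x^\tau/v^\tau\to u_x^0/v^0$ strongly in $L^2_{\mathrm{loc}}$.

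The passage to the limit in \eqref{1.3} then proceeds in the sense of distributions. The first two equations being linear in the unknowns, the above convergences alone deliver $v_t^0=u_x^0$ and $u_t^0+p(v^0)_x=S_x^0$. For the constitutive law I would rewrite $\eqref{1.3}_3$ as $S^\tau-\mu u_x^\tau/v^\tau=-\tau S_t^\tau+2a\tau S^\tau u_x^\tau/v^\tau$ and test against $\phi\in C_c^\infty((0,\infty)\times\mathbb{R})$: integration by parts gives $\langle\tau S_t^\tau,\phi\rangle=-\sqrt{\tau}\,\langle\sqrt{\tau}S^\tau,\phi_t\rangle\to 0$, and the quadratic correction factorizes as $\sqrt{\tau}\cdot(\sqrt{\tau}S^\tau)\cdot u_x^\tau/v^\tau$, a null sequence times a bounded factor in $L^2_{\mathrm{loc}}$, so it also vanishes. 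Hence $S^0=\mu u_x^0/v^0$ a.e., and substituting into the momentum equation yields \eqref{CNS}. The claimed regularity follows from weak-$*$ lower semicontinuity ($L^\infty H^2$), Aubin--Lions ($C^0H^1$), and the limit equations applied to the time derivatives ($C^1L^2$). The main obstacle I expect is precisely the Oldroyd quadratic term $2aSu_x/v$, which couples stress to velocity gradient with no a-priori $\tau$-weight; what saves the argument is the $\sqrt{\tau}$-weighted $H^2$ bound on $S^\tau$ from Theorem \ref{th1.1}, which permits splitting $\tau=\sqrt{\tau}\cdot\sqrt{\tau}$, while the lack of compactness on $\mathbb{R}$ is handled by the standard diagonal extraction in $(T,R)$.
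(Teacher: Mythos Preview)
Your proposal is correct and follows essentially the same route as the paper: extract weak-$*$ limits from the uniform bounds of Theorem~\ref{th1.1}, upgrade to strong local convergence via Aubin--Lions/Simon on the time derivatives $v_t^\tau=u_x^\tau$ and $u_t^\tau=S_x^\tau-p(v^\tau)_x$, and pass to the limit in each line of \eqref{1.3}, killing the $\tau$-weighted terms in the constitutive law by the $\sqrt{\tau}S^\tau$ bound. If anything, you are more explicit than the paper about the quadratic Oldroyd correction $2a\tau S^\tau u_x^\tau/v^\tau$ and about the diagonal extraction in $(T,R)$, but these are presentational rather than substantive differences.
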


The paper is organized as follows. In Section 2 we show the local existence  of smooth solutions to system \eqref{1.4} and establish  uniform a priori estimates for the obtained solutions. In Section 3, we show the global existence of smooth solutions by usual bootstrap methods and we justify the limit $\tau\to0$ by compactness arguments. 

\section{Local existence and uniform a priori estimates}
\label{sec:local}

In this part, we first present a local existence theorem and we then give uniform a priori estimates for the obtained solutions.
Note that \eqref{1.3} rewrites in the symmetric hyperbolic form
\begin{align*}
A^0(W)W_t+A^1(W)W_x+B(W)W=0
\end{align*}
when the vector-valued function $W=[v,u,S]$ is sufficiently regular, using symmetric matrices
\begin{align*}
A^0(W)=
\begin{pmatrix}
-p^\prime(v)&0&0\\
0&1&0\\
0&0&\frac{\tau v}{2\tau a S+\mu}
\end{pmatrix},
A^1(W)=
\begin{pmatrix}
0&p^\prime(v)&0\\
p^\prime(v)&0&-1\\
0&-1&0
\end{pmatrix},
B(W)=
\begin{pmatrix}
0&0&0\\
0&0&0\\
0&0&\frac{v}{2\tau a S+\mu}
\end{pmatrix}
\end{align*}
where $A^0(W)$ is \emph{positive} symmetric provided $v=\frac{1}{\rho}>0$, $2\tau a S+\mu>0$.
Then, we have the following local existence theorem, see \cite{Kaw83, SK, TAY}.
\begin{theorem}\label{th2.1}
Let $a\in[-1,1]$ and $\mu>0$ be fixed. Then, for any $\tau\in(0,1)$ and any initial data 
\begin{align}\label{2.1}
(v_0-1,u_0,\sqrt{\tau} S_0) \in H^2(\mathbb R)
\end{align}
with {$\min_x {2\tau a S_0(x)+\mu}>0$ and  $\min_xv_0(x)>0$}, the initial value problem \eqref{1.3}-\eqref{1.4} has a unique solution $(v, u, S)$ on the time interval $t\in [0,T]$ for some $T>0$, with
\begin{align} \label{2.2}
(v-1,u, \sqrt{\tau} S) \in C^0([0,T];H^2(\mathbb R)) \cap C^1([0,T], H^1 (\mathbb R))
\end{align}
and $2\tau a S(t,x)+\mu>0$, $v(t,x)>0$ for $(t,x)\in [0,T]\times \mathbb R $.  
\end{theorem}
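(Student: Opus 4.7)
The plan is to reduce the claim to the standard local well-posedness theory for quasilinear symmetric hyperbolic systems and appeal to Kato/Kawashima (cited \cite{Kaw83, SK, TAY}). The symmetric form
$$A^0(W) W_t + A^1(W) W_x + B(W) W = 0$$
has already been exhibited, with $A^1(W)$ symmetric and $A^0(W)$ symmetric positive-definite whenever $v>0$ and $\sigma:=2\tau a S + \mu>0$; the dependence on $W$ is smooth (in fact rational) on that open set, and $H^2(\mathbb R)\hookrightarrow C^1(\mathbb R)$ in one dimension. Applying the embedding to the initial datum, together with $\min_x v_0>0$ and $\min_x \sigma_0>0$, provides $\eta>0$ with $v_0\ge\eta$ and $\sigma_0\ge\eta$ uniformly in $x$.

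First I would construct approximate solutions by Picard iteration: given a reference state $W^k = (v^k, u^k, S^k)$ with $v^k\ge \eta/2$ and $\sigma^k\ge\eta/2$ on some short strip, solve the \emph{linear} symmetric hyperbolic system
$$A^0(W^k) W^{k+1}_t + A^1(W^k) W^{k+1}_x + B(W^k) W^{k+1} = 0$$
with initial datum $(v_0,u_0,S_0)$, for which Friedrichs theory provides a solution in $C^0([0,T_k];H^2)\cap C^1([0,T_k];H^1)$. Differentiating up to twice in $x$, multiplying by the corresponding derivative of $W^{k+1}$, using the symmetry of $A^0, A^1$ and Moser-type product and commutator estimates (recalling that $H^2$ is an algebra in one dimension), yields
$$\frac{d}{dt}\|(v^{k+1}\!-\!1, u^{k+1}, \sqrt{\tau}S^{k+1})\|_{H^2}^2 \le C\bigl(\|(v^k\!-\!1, u^k, \sqrt{\tau}S^k)\|_{H^2}, \eta, \tau\bigr)\bigl(1+\|(v^{k+1}\!-\!1, u^{k+1}, \sqrt{\tau}S^{k+1})\|_{H^2}^2\bigr).$$
A standard bootstrap fixes a common $T>0$ and a ball in which all iterates lie, while $H^2\hookrightarrow C^0$ combined with continuity in time propagates $v^{k+1}\ge \eta/2$ and $\sigma^{k+1}\ge \eta/2$ on $[0,T]\times\mathbb R$.

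Next I would prove contraction of the sequence in the weaker norm $C^0([0,T];L^2)$ by an energy estimate on the difference $W^{k+1}-W^k$, again exploiting symmetry. The fixed point $W$ lies a priori in $L^\infty([0,T];H^2)$; the upgrade to strong continuity with values in $H^2$ follows from the Bona--Smith argument (approximation of the data, continuous dependence at lower regularity, and passage to the limit via the energy identity). The membership $W_t\in C^0([0,T];H^1)$ is then read off from the equation itself, since $W_t = -(A^0)^{-1}(A^1 W_x + B W)$ is in $H^1$ by the algebra property. Uniqueness is the same contraction estimate applied to two candidate solutions.

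The main obstacle is maintaining the positivity $v>0$ and $\sigma = 2\tau a S + \mu > 0$ along the iteration, since these underpin the positive-definiteness of $A^0(W)$ and hence the whole symmetric hyperbolic framework; this is controlled by the $H^2\hookrightarrow C^0$ embedding and time continuity, shrinking $T$ if needed. Everything else is routine once that is secured, which is why the authors merely refer the reader to \cite{Kaw83, SK, TAY}.
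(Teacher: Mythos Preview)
Your proposal is correct and follows exactly the classical symmetric-hyperbolic machinery that the paper invokes; the paper itself gives no proof here beyond exhibiting the symmetrized form and citing \cite{Kaw83, SK, TAY}, so your outline is a fleshed-out version of those references rather than a different route. Your identification of the positivity of $v$ and $2\tau a S+\mu$ as the only nontrivial point (handled by $H^2\hookrightarrow C^0$ and time continuity) matches precisely the paper's implicit reasoning.
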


To next prove Theorem \ref{th1.1}, the key point is an a priori estimate
when $\tau$ and $\epsilon_0$ are small enough.

To that aim, denoting $v(s,\cdot),u(s,\cdot),S(s,\cdot)$ values in $H^2(\mathbb R)$ at $s\in [0,T]$ 
we introduce 
$$ E(t)=\sup_{0\le s\le t}\|(v-1, u, \sqrt{\tau} S)(s,\cdot)\|_{H^2}^2,\,\quad 
\mathcal D(t)=\|(v_x, u_x)(t,\cdot)\|_{H^1}^2+\|S(t,\cdot)\|_{H^2}^2. $$

The a priori estimate result is stated as follows, given $a\in[-1,1]$ and $\mu>0$ fixed.
\begin{proposition}\label{pro2.2}
Let $(v-1, u, S) \in C^0([0,T], H^2) \cap C^1([0, T], H^1)$ be a local solution given by Theorem \ref{th2.1} for some $T>0$. A constant $\delta>0$ independent of $T$ and $\tau\in(0,1)$ 
exists such that, if
\begin{align}\label{2.3}
E(T)< \delta
\end{align}
then the solution $(v, u, S)$ satisfies
\begin{align}\label{2.4}
E(t)+\int_0^t \mathcal D(s)\dif s \le C(E_0+E^\frac{1}{2}(t)\int_0^t \mathcal D(s)\dif s),\qquad \forall t\in (0, T)
\end{align}
where $C$ is a constant independent of $T$, initial data and $\tau$. 
\end{proposition}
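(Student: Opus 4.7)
The strategy is the classical combination of a physical-energy estimate with a Kawashima-type interaction functional, designed to compensate for the fact that the system is purely hyperbolic with dissipation only on the stress $S$. The smallness assumption \eqref{2.3} is used throughout to keep $v$ uniformly close to $1$ (so $-p'(v)\ge c>0$ and $1/v$ stays bounded) and $2\tau aS+\mu$ uniformly bounded below; combined with the 1D embeddings $H^1\hookrightarrow L^\infty$, $H^2\hookrightarrow C^1$, this yields Moser-type estimates with constants independent of $\tau\in(0,1)$. The key cancellation is that the $\sqrt\tau$-weight on $S$ in the energy makes any remainder of the form $\tau\int|u_x|S^2\,dx$ bounded by $\|u_x\|_{L^\infty}\cdot\|S\|_{L^2}^2\le CE^{1/2}(t)\mathcal D(t)$, since $\|S\|_{L^2}^2$ belongs to $\mathcal D$ \emph{without} any $\tau$-weight, while $\tau\le 1$.

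\emph{Energy identities.} Multiplying $\eqref{1.3}_1$ by $p(1)-p(v)$, $\eqref{1.3}_2$ by $u$, and $\eqref{1.3}_3$ by $vS/\mu$, then integrating by parts and using $v_t=u_x$, the non-dissipative coupling $\int S u_x\,dx$ cancels across the three equations and one obtains
\[
\frac{d}{dt}\!\int\!\Big[\tfrac{u^{2}}{2}+\Psi(v)+\tfrac{\tau v S^{2}}{2\mu}\Big]dx+\frac{1}{\mu}\!\int v S^{2}\,dx=\tfrac{\tau(1+4a)}{2\mu}\!\int u_x S^{2}\,dx,
\]
with $\Psi(v)=\int_{1}^{v}(p(1)-p(s))\,ds\sim\tfrac{\gamma}{2}(v-1)^{2}$, and cubic right-hand side bounded by $CE^{1/2}\mathcal D$ as above. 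Differentiating the system once and twice in $x$ and repeating the same scheme yields analogous identities at orders $1,2$; commutator terms are handled by Moser estimates, and $\sqrt\tau$ is tracked carefully so that every cubic remainder comes out as $CE^{1/2}\mathcal D$. Summing the three identities gives the full $\|S\|_{H^{2}}^{2}$-dissipation.

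\emph{Interaction functional.} The previous step produces no dissipation on $(v_x,u_x)$. To recover it, multiply $\eqref{1.3}_2$ by $-v_x$ and integrate; using $v_{xt}=u_{xx}$,
\[
\int u_x^{2}\,dx+\int(-p'(v))\,v_x^{2}\,dx=-\frac{d}{dt}\!\int uv_x\,dx-\!\int S_xv_x\,dx,
\]
where $-p'(v)=\gamma v^{-\gamma-1}\ge c>0$, and $\int S_xv_x$ is absorbed by Young's inequality into a small fraction of the $\|S_x\|_{L^{2}}^{2}$ dissipation already available. An analogous identity on $\partial_x\eqref{1.3}_2$ tested against $-v_{xx}$ produces the corresponding $H^{1}$-dissipation on $(v_x,u_x)$. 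Since $|\int uv_x\,dx|\le E(t)$, adding these mixed identities to the energy identities with a sufficiently small positive weight $\kappa$ yields
\[
\frac{d}{dt}\mathcal E(t)+c\,\mathcal D(t)\le C\,E^{1/2}(t)\,\mathcal D(t),
\]
with $\mathcal E(t)$ equivalent to $\|(v-1,u,\sqrt\tau S)(t,\cdot)\|_{H^{2}}^{2}$ for $\delta$ small enough. Integrating in time, using $E(s)\le E(t)$ for $s\le t$ to pull $E^{1/2}(t)$ out of the time integral, and passing to the supremum in $s$ yields \eqref{2.4}.

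\emph{Main obstacle.} The delicate point is the nonconservative convection term $-2aS u_x/v$ in $\eqref{1.3}_3$ when $a\neq 0$, which is absent from prior one-dimensional treatments such as \cite{HRJDE}. Under $k$ spatial derivatives with $k\in\{1,2\}$ it generates quadratic remainders of the form $S\,\partial_x^{k}u_x$ and $u_x\,\partial_x^{k}S$, plus lower-order commutators; since $E(t)$ controls only $\sqrt\tau S$ in $H^{2}$, these remainders are \emph{not} automatically small in $\tau$ and must be routed into the unweighted $\|S\|_{H^{2}}^{2}+\|u_x\|_{H^{1}}^{2}$ dissipation, not into the $\tau$-weighted part of $E$. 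The bookkeeping that ensures every commutator factors as (an $L^{\infty}$-norm bounded by $E^{1/2}$) times (an $L^{2}$-norm bounded by $\mathcal D^{1/2}$), with no stray negative power of $\tau$, is what makes the constant $C$ in \eqref{2.4} independent of $\tau\in(0,1)$.
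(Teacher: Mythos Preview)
Your strategy---physical energy plus a Kawashima-type correction to recover the missing $(v_x,u_x)$ dissipation---is exactly the paper's, and your $L^2$ identity, the higher-order scheme, and the handling of the cubic remainders via $\tau\le 1$ and the unweighted $\|S\|_{H^2}$-dissipation are all in line with Lemmas~\ref{le2.3}--\ref{le2.5}.

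There is, however, a genuine gap in the interaction step. Testing $\eqref{1.3}_2$ against $-v_x$ does \emph{not} produce $\int u_x^2$ with a good sign: using $v_{xt}=u_{xx}$ one has
\[
-\int u_t v_x\,dx=-\frac{d}{dt}\int u v_x\,dx+\int u\,u_{xx}\,dx=-\frac{d}{dt}\int u v_x\,dx-\int u_x^2\,dx,
\]
so the correct identity is
\[
\int(-p'(v))v_x^{2}\,dx=\frac{d}{dt}\int u v_x\,dx+\int u_x^{2}\,dx-\int S_x v_x\,dx,
\]
with $\int u_x^{2}$ sitting on the \emph{right} as a bad term. Thus this multiplier (and its $\partial_x$-version) yields only $v_x$-dissipation, at the cost of an uncontrolled $\int_0^t\|u_x\|_{H^1}^2$. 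Since the energy estimates give dissipation only on $\|S\|_{H^2}$, your argument as written does not close.

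The paper fills this gap with a second interaction estimate (Lemma~\ref{le2.6}, first half): one tests $\partial_x^\beta\eqref{1.3}_3$, $\beta=0,1$, against $\partial_x^{\beta+1}u$. The principal part gives $\int\frac{2a\tau S+\mu}{v}(\partial_x^{\beta+1}u)^2$ on the left; the term $\tau\partial_x^\beta S_t\cdot\partial_x^{\beta+1}u$ is integrated by parts in time, and the resulting $\tau\partial_x^\beta S\cdot\partial_x^{\beta+1}u_t$ is rewritten via $\eqref{1.3}_2$ and absorbed using the already-available $\|S\|_{H^2}$-dissipation together with a small multiple of the sought $\|v_x\|_{H^1}$-dissipation. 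This yields \eqref{2.12}; combined with the $v_x$-estimate \eqref{2.13} (your identity with the correct sign) and a suitable choice of the small parameter $\nu$, the two inequalities close on each other. Adding this missing step---and correcting the sign---your proof goes through along the paper's lines.
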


We show that Proposition \ref{pro2.2} holds using a series of lemmas where $C$ denotes a universal constant independent of $\tau$, \mycor{$\tau\in(0,\mu^2)$,} the initial data and the time span. 

From now on we only consider small $\delta$ 
e.g. $\delta\le\min\{\frac{1}{16},\frac\mu4\}$.
This is a non-void assumption: one can always require $E_0$ small enough such that $E(T)$ is as small as necessary for some $T>0$.
Indeed, first, it then holds on the one hand
\begin{align*}
\|(v-1,u,\sqrt{\tau} S)(t,\cdot)\|_{L^\infty}\le \|(v-1,u,\sqrt{\tau} S)(t,\cdot)\|_{H^1}\le E(t)^\frac{1}{2}\le \frac{1}{4}
\quad \forall t\in[0,T]\,,
\end{align*}
$$\frac{3}{4}\le \|v\|_{L^\infty}\le \frac{5}{4}.$$ 
Second, since we require  $\tau<1$, it also holds 
$$\frac{\mu}{2} \le \|2\tau a S+\mu\|_{L^\infty} \le \frac{3\mu}2\,.$$

\begin{remark}
Prop.~\ref{pro2.2} is the reason why we require a bound above for $\tau$: without bound above on $\tau$ and $\mu$ fixed,
it is not obvious how to bound $2\tau a S+\mu$ below. 
\end{remark}

In addition, recalling that by Taylor's formula it holds for some $\xi \in (1,v)$
\begin{align*}
\frac{1}{\gamma-1}(v^{1-\gamma}-1)+v-1=\frac{1}{\gamma-1}(v^{1-\gamma}-1-(1-\gamma)(v-1))=\gamma (v-1)^2 \xi^{-\gamma-1} \,,
\end{align*}
it also results from the above 
assumption 
that there exist $c_1>c_0>0$ such that for any $(v-1)\in H^1$ 
\begin{align}\label{2.5}
c_0\int_{\mathbb R} (v-1)^2 \dif x \le \int_{\mathbb R} \left(\frac{v^{1-\gamma} -1}{\gamma-1}+v-1\right) \dif x \le c_1 \int_{\mathbb R} (v-1)^2 \dif x.
\end{align}

Let us now start by $L^2$ estimates of solutions.

\begin{lemma}\label{le2.3}
Fix $a\in[-1,1]$, $\mu>0$ and \mycor{$\in(0,\mu^2)$} $\tau\in(0,1)$.
Let $(v-1, u, S) \in C^0([0,T], H^2) \cap C^1([0, T], H^1)$ be a local solution given by Theorem \ref{th2.1} for some $T>0$,
with $\delta > E(T)$ small.\newline
There is a constant $C$ independent of $T$, initial data and $\tau$ 
such that for all times $t\in[0,T]$
\begin{align}\label{2.6}
 \int_{\mathbb R} \left(\frac{u^2}{2}+\frac{\tau v}{2\mu} S^2+\frac{v^{1-\gamma}-1}{\gamma-1}+v-1\right)(t,\cdot)\dif x +\int_0^t\int_{\mathbb R} \frac{v}{\mu} S^2\dif x \le C(
 E_0+E(t)^\frac{1}{2} \int_0^t \mathcal D(s) \dif s ).
\end{align}
\end{lemma}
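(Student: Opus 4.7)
The plan is to derive the estimate by a direct energy method, testing each of the three equations in \eqref{1.3} against a suitable multiplier, adding them, and then absorbing the remaining cubic terms using the smallness of $E(t)$.

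First, I would multiply the momentum equation $u_t+p(v)_x=S_x$ by $u$ and integrate in $x\in\mathbb R$, producing $\tfrac12\tfrac{d}{dt}\|u\|_{L^2}^2+\int u\,p(v)_x\,\dif x=-\int u_x S\,\dif x$ after integration by parts. Next, using $v_t=u_x$ and the identity $\tfrac{d}{dv}\bigl(\tfrac{v^{1-\gamma}-1}{\gamma-1}+v-1\bigr)=1-p(v)$, one computes $\tfrac{d}{dt}\int\bigl(\tfrac{v^{1-\gamma}-1}{\gamma-1}+v-1\bigr)\dif x=\int u\,p(v)_x\,\dif x$ (after one more integration by parts), so that adding to the momentum identity cancels the pressure term and leaves $\tfrac{d}{dt}\!\int\!\bigl(\tfrac{u^2}{2}+\tfrac{v^{1-\gamma}-1}{\gamma-1}+v-1\bigr)\dif x=-\!\int\! u_xS\,\dif x$.

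Then I would multiply the constitutive law $\tau S_t-2\tau\tfrac{aS}{v}u_x+S=\mu\tfrac{u_x}{v}$ by $\tfrac{v}{\mu}S$ and integrate. Writing $\tfrac{d}{dt}\int\tfrac{\tau v}{2\mu}S^2\dif x=\int\tfrac{\tau v_t}{2\mu}S^2\dif x+\int\tfrac{\tau v}{\mu}S\,S_t\,\dif x$ and replacing $\tau S_t$ and $v_t=u_x$ from the equations, the $\int Su_x\,\dif x$ piece exactly cancels the right-hand side inherited from the previous step, the dissipative piece $-\int\tfrac{v}{\mu}S^2\dif x$ appears on the right, and the two remaining terms combine into
\begin{align*}
\int\frac{\tau u_x S^2}{2\mu}(1+4a)\,\dif x\,,
\end{align*}
which is a cubic error. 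After integration in time over $[0,t]$ and using \eqref{2.5} to bound the initial contribution from $\tfrac{v_0^{1-\gamma}-1}{\gamma-1}+v_0-1$ by $c_1\|v_0-1\|_{L^2}^2\le CE_0$, and also $\int\tfrac{\tau v_0}{2\mu}S_0^2\dif x\le C\|\sqrt{\tau}S_0\|_{L^2}^2\le CE_0$ (using $\|v\|_{L^\infty}\le 5/4$), one arrives at the identity
\begin{align*}
\int\!\Bigl(\tfrac{u^2}{2}+\tfrac{\tau v}{2\mu}S^2+\tfrac{v^{1-\gamma}-1}{\gamma-1}+v-1\Bigr)(t)\,\dif x+\!\int_0^t\!\!\int\tfrac{v}{\mu}S^2\,\dif x\,\dif s\le CE_0+\!\int_0^t\!\!\int\tfrac{\tau u_xS^2}{2\mu}(1+4a)\,\dif x\,\dif s.
\end{align*}

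The main obstacle, and the only genuinely nonlinear step, is the bound on the cubic remainder. For this I would write $\int\tau|u_x|S^2\,\dif x\le\|\sqrt{\tau}S\|_{L^\infty}\|\sqrt{\tau}S\|_{L^2}\|u_x\|_{L^2}$, use the Sobolev embedding $H^1\hookrightarrow L^\infty$ on the real line to get $\|\sqrt{\tau}S\|_{L^\infty}\le C\|\sqrt{\tau}S\|_{H^1}\le CE(t)^{1/2}$, bound $\|\sqrt{\tau}S\|_{L^2}\le\sqrt{\tau}\|S\|_{L^2}\le\sqrt{\tau}\mathcal D(s)^{1/2}$ and $\|u_x\|_{L^2}\le\mathcal D(s)^{1/2}$, and use $\tau<1$; this yields $\int\tau|u_x|S^2\dif x\le CE(t)^{1/2}\mathcal D(s)$, which after integration in $s$ produces the desired term $CE(t)^{1/2}\!\int_0^t\!\mathcal D(s)\dif s$ and completes the proof of \eqref{2.6}. (Note that all lower bounds on $v$ and on $2\tau aS+\mu$ needed to justify divisions are already guaranteed by the discussion following \eqref{2.3}, so they are not an obstacle here.)
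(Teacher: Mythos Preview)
Your proof is correct and follows essentially the same approach as the paper: multiply $\eqref{1.3}_2$ by $u$ and $\eqref{1.3}_3$ by $\tfrac{v}{\mu}S$, combine, handle the pressure term via $\eqref{1.3}_1$ to produce the potential $\tfrac{v^{1-\gamma}-1}{\gamma-1}+v-1$, and arrive at the same cubic remainder $\tfrac{(1+4a)\tau}{2\mu}\int u_xS^2\dif x$. The only cosmetic difference is that the paper bounds this remainder by $C\|u_x\|_{L^\infty}\|\sqrt\tau S\|_{L^2}^2\le CE(t)^{1/2}\mathcal D(t)$, whereas you split it as $\|\sqrt\tau S\|_{L^\infty}\|\sqrt\tau S\|_{L^2}\|u_x\|_{L^2}$; both distributions of the H\"older factors give the same final estimate.
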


\begin{proof}
Multiplying equation $\eqref{1.3}_2$ by $u$, and $\eqref{1.3}_3$ by $\frac{v}{\mu} S$, we get 
after summing the two equations, and integration with respect to $x$ over $\mathbb R$, 
on noting $\int_{\mathbb R}(uS_x+Su_x)\dif x=0$
\begin{align*}
\frac{\dif}{\dif t} \int_{\mathbb R} \left(\frac{1}{2} u^2+\frac{\tau v}{2\mu} S^2\right)\dif x
+\int_{\mathbb R} \frac{v}{\mu}S^2 \dif x
+\int_{\mathbb R} p_x u \dif x=\frac{(4a+1)\tau}{2\mu}\int_{\mathbb R} u_x S^2 \dif x
\end{align*}
which holds in the distributional sense $\mathcal D'(0,T)$. Next, using 
equation $\eqref{1.3}_1$, one gets
\begin{align*}
\int_{\mathbb R} \left(p(v)\right)_x u \dif x & = -\int_{\mathbb R} p(v) u_x\dif x 
= -\int_{\mathbb R} (p(v)+1) u_x\dif x 
= -\int_{\mathbb R} (p(v)+1) v_t \dif x 
\\
& = \frac{\dif}{\dif t}\int_{\mathbb R} \left( \frac{v^{1-\gamma}-1}{\gamma-1} + v-1 \right) \dif x
\end{align*}
therefore
\begin{align*}
\frac{\dif}{\dif t} \int_{\mathbb R} \left\{ \frac{1}{2} u^2+\frac{\tau v}{2\mu} S^2 +\frac{v^{1-\gamma}-1}{\gamma-1}+v-1\right\} \dif x
+ \int_{\mathbb R} \frac{v}{\mu} S^2 \dif x 
& \le C \|u_x\|_{L^\infty} \|\sqrt\tau S\|_{L^2}^2
\\
& \le C E(t)^\frac{1}{2} \mathcal D(t)
\end{align*}
which can be integrated over $[0^+,t]$ and bounded above with \eqref{2.5} to yield the result \eqref{2.6}.
\end{proof}

Next, we give the higher-order energy estimates.

\begin{lemma}\label{le2.4}
Fix $a\in[-1,1]$, $\mu>0$ and \mycor{$\in(0,\mu^2)$}$\tau\in(0,1)$.
Let $(v-1, u, S) \in C^0([0,T], H^2) \cap C^1([0, T], H^1)$ be a local solution given by Theorem \ref{th2.1} for some $T>0$,
with $\delta > E(T)$ small.\newline
There is a constant $C$ independent of $T$, initial data and $\tau$ such that for all times $t\in[0,T]$
\begin{align}
&\SUM \alpha 1 2  \int_{\mathbb R} \left( \frac{1}{2} (\partial_x^\alpha u)^2+\frac{ v}{2\mu} \tau (\partial_x^\alpha S)^2-p^\prime(v) (\partial_x^\alpha v)^2 \right)(t,\cdot)\dif x 
+\SUM \alpha 1 2\int_0^t \int_{\mathbb R} \frac{v}{\mu}(\partial_x^\alpha S)^2\dif x \dif t\nonumber\\
&\le C (E_0+E^\frac{1}{2}(t) \int_0^t \mathcal D(s)\dif s). \label{2.7}
\end{align}
\end{lemma}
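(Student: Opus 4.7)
The plan is to mirror the $L^2$ argument of Lemma~\ref{le2.3} at the level of each spatial derivative $\alpha\in\{1,2\}$. First, I would apply $\partial_x^\alpha$ to each of the three equations of \eqref{1.3}, obtaining
\[
\partial_x^\alpha v_t-\partial_x^{\alpha+1}u=0,\quad \partial_x^\alpha u_t+\partial_x^{\alpha+1} p(v)=\partial_x^{\alpha+1}S,\quad \tau\partial_x^\alpha S_t-\tau\partial_x^\alpha\bigl(\tfrac{2aSu_x}{v}\bigr)+\partial_x^\alpha S=\mu\partial_x^\alpha\bigl(\tfrac{u_x}{v}\bigr).
\]
Then I would test these three equations against $-p'(v)\partial_x^\alpha v$, $\partial_x^\alpha u$ and $\tfrac{v}{\mu}\partial_x^\alpha S$ respectively, sum, and integrate over $\mathbb R$. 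The time-derivative terms yield $\frac{\dif}{\dif t}$ of the desired energy density, with remainders $\frac12\int p''(v)u_x(\partial_x^\alpha v)^2\,\dif x$ and $\int\frac{\tau u_x}{2\mu}(\partial_x^\alpha S)^2\,\dif x$ from the time-dependent coefficients $-p'(v)$ and $\tau v/(2\mu)$ (using $v_t=u_x$).

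The critical step is to identify the leading-order cancellations between the three tested equations. The $v$--$u$ coupling yields, after one integration by parts,
\[
\int p'(v)\bigl(\partial_x^\alpha v\,\partial_x^{\alpha+1}u+\partial_x^{\alpha+1}v\,\partial_x^\alpha u\bigr)\,\dif x=-\int p''(v)\,v_x\,\partial_x^\alpha v\,\partial_x^\alpha u\,\dif x,
\]
which is already of lower order. Similarly, the principal $u$--$S$ cross terms $-\int \partial_x^\alpha S\cdot\partial_x^{\alpha+1} u\,\dif x$ (from the second tested equation after integration by parts) and $\int v\,\partial_x^\alpha\bigl(\tfrac{u_x}{v}\bigr)\partial_x^\alpha S\,\dif x$ (from the third) combine so that the top-order contribution $\int \partial_x^{\alpha+1}u\cdot\partial_x^\alpha S\,\dif x$ cancels, leaving only a remainder containing $v_x$. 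The dissipative term $\int\frac{v}{\mu}(\partial_x^\alpha S)^2\,\dif x$ arises directly from the third equation.

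Finally, I would bound every remainder by $CE^{1/2}(t)\mathcal D(t)$. The key tools are the uniform pointwise bounds on $v$ and $2\tau aS+\mu$ obtained earlier, the Sobolev embedding $H^1\hookrightarrow L^\infty$ which gives $\|u_x\|_{L^\infty}$, $\|v_x\|_{L^\infty}$ and $\|\sqrt\tau S\|_{L^\infty}$ each dominated by $CE^{1/2}(t)$, and the observation $\tau\|S\|_{H^1}^2\le\mathcal D(t)$ (using $\tau<1$). Each remainder is a product of three factors; putting one in $L^\infty$ (bounded by $E^{1/2}$) and two in $L^2$ (bounded by $\mathcal D^{1/2}$) gives the claim, provided one extracts the correct power of $\sqrt\tau$ from every $S$-factor so that $\sqrt\tau S$, rather than $S$, appears in $L^\infty$. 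Integration in time on $[0,t]$, together with $E(0)\le E_0$, then yields \eqref{2.7}.

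The main obstacle I anticipate is the bookkeeping for $\alpha=2$. Expanding $\partial_x^2\bigl(\tfrac{2aSu_x}{v}\bigr)$ and $\partial_x^2\bigl(\tfrac{u_x}{v}\bigr)$ produces many products in which derivatives can fall in several ways; for each one, one must place the highest-order factor in $L^2$ (to absorb it into $\mathcal D$), assign a single lower-order factor to $L^\infty$ (controlled by $E^{1/2}$), and track the powers of $\tau$ so that any $S$-factor appears under the form $\sqrt\tau S$ in $L^\infty$, which is the only quantity involving $S$ controlled uniformly in $\tau$ by $E^{1/2}$.
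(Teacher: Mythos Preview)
Your plan has a genuine gap at the top-order cancellation for $\alpha=2$. Testing the differentiated third equation against $\tfrac{v}{\mu}\partial_x^\alpha S$ leaves the nonlinear term $\tau\partial_x^\alpha\bigl(\tfrac{2aSu_x}{v}\bigr)$ to be treated as a remainder. But for $\alpha=2$ its principal part is $\tfrac{2a\tau S}{v}\,\partial_x^3 u$, and after multiplication by $\tfrac{v}{\mu}\partial_x^2 S$ you face
\[
\int_{\mathbb R}\frac{2a\tau S}{\mu}\,\partial_x^3 u\,\partial_x^2 S\,\dif x.
\]
Neither $\partial_x^3 u$ nor (after an integration by parts) $\partial_x^3 S$ is controlled by $E(t)$ or $\mathcal D(t)$: $E$ bounds only $\|u\|_{H^2}$, and $\mathcal D$ bounds only $\|u_x\|_{H^1}$ and $\|S\|_{H^2}$. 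So this term cannot be absorbed into $CE^{1/2}\mathcal D$, and the ``bookkeeping'' you anticipate is not the real obstacle. (When $a=0$ the term vanishes, which is why your scheme does work in the corotational case treated in earlier literature.)

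The paper's fix is to group the two forcing contributions in the third equation as $\tfrac{2a\tau S+\mu}{v}\,\partial_x^{\alpha+1}u$ plus a genuine commutator $g=\partial_x^\alpha\bigl(\tfrac{2a\tau S+\mu}{v}\bigr)u_x$, and then to multiply by the symmetrizer entry $\tfrac{v}{2a\tau S+\mu}\,\partial_x^\alpha S$ rather than $\tfrac{v}{\mu}\,\partial_x^\alpha S$. With this multiplier the cross term $\int\partial_x^{\alpha+1}u\,\partial_x^\alpha S\,\dif x$ cancels \emph{exactly} against the one coming from the momentum equation ($I_3=0$), and $g$ is a true lower-order remainder bounded via Moser-type inequalities. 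The resulting energy density and dissipation carry the weight $\tfrac{v}{2a\tau S+\mu}$ instead of $\tfrac{v}{\mu}$; the form stated in \eqref{2.7} is then recovered from the a priori bounds $\tfrac{\mu}{2}\le 2a\tau S+\mu\le\tfrac{3\mu}{2}$.
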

\begin{proof}
Derivating equations \eqref{1.3} 
once or twice (take $\partial^\alpha_x$, $\alpha\in\{1,2\}$) we get
\begin{align}\label{2.8}
\begin{cases}
\partial_t \partial_x^\alpha v= \partial_x^{\alpha+1} u,\\
\partial_t \partial_x^\alpha u +p^\prime(v) \partial_{x}^{\alpha+1} v=\partial_x^{\alpha+1} S+f,\\
\tau \partial_t \partial_x^\alpha S +\partial_x^\alpha S=\left(\frac{2a\tau S+\mu}{v}\right) \partial_x^{\alpha+1} u+g,
\end{cases}
\end{align}
where we have denoted 
$$
f:=\partial_x^\alpha (p^\prime(v)v_x)-p^\prime(v) \partial_x^{\alpha+1} v\equiv \partial_x^\alpha (p^\prime(v))v_x
\quad
g:=
\partial_x^\alpha \left(\frac{2a\tau S+\mu}{v}\right) u_x.
$$
Multiplying the above equations by $-p^\prime(v) \partial_x^\alpha v$, $ \partial_x^\alpha u$ and $\frac{v}{2a\tau S+\mu} \partial_x^\alpha S$, respectively, next summing them and integrating over $x\in\mathbb R$, one gets in the distributional sense on $t\in (0,T)$
\begin{align*}
&\frac{\dif}{\dif t} \int_{\mathbb R} \left( \frac{1}{2} (-p^\prime(v))(\partial_x^\alpha v)^2+\frac{1}{2} (\partial_x^\alpha u)^2
+\frac{v}{2(2a\tau S+\mu)}\tau (\partial_x^\alpha S)^2 \right)\dif x
+\int_{\mathbb R} \frac{v}{2a\tau S+\mu} (\partial_x^\alpha S)^2 \dif x\\
&=\int_{\mathbb R} \left(-\frac{1}{2} p^{\prime\prime}(v) v_t (\partial_x^\alpha v)^2+\tau\left(\frac{v}{2(2a\tau S+\mu)}\right)_t  (\partial_x^\alpha S)^2 \right)\dif x
+\int_{\mathbb R} p^\prime(v)(\partial_x^{\alpha+1}  u\cdot \partial_x^\alpha v+\partial_x^{\alpha+1} v\cdot \partial_x^\alpha u) \dif x\\
&+\int_{\mathbb R} (\partial_x^{\alpha+1} S \cdot \partial_x^\alpha u+\partial_x^{\alpha+1} u\cdot \partial_x^\alpha S)\dif x+ \int_{\mathbb R} (f \cdot\partial_x^\alpha u+g \cdot \frac{v}{2a\tau S+\mu} \partial_x^\alpha S)\dif x\\
&=:I_1+I_2+I_3+I_4.
\end{align*}
We estimate $I_i$ for $1\le i \le 4$ separately.
First, using equations $\eqref{1.3}_1$ and $\eqref{1.3}_3$, we get
\begin{align*}
\| \tau \left(\frac{v}{2a\tau S+\mu}\right)_t\|_{L^\infty}
&=\|\frac{\tau}{2a\tau S+\mu} v_t -\frac{2a\tau v}{(2a\tau S+\mu)^2} \tau S_t\|_{L^\infty}\\
&=\| \frac{\tau}{2a\tau S+\mu} u_x-\frac{2a\tau v}{(2a\tau S+\mu)^2} (\frac{2a\tau S+\mu}{v}u_x-S)\|_{L^\infty}\\
&\le C E^\frac{1}{2}(t)
\end{align*}
and $\|p^{\prime\prime}(v)\|_{L^\infty}\le C E^\frac{1}{2}(t)$ on recalling 
the bounds on $2a\tau S+\mu$, $v$ and $\|u_x\|_{L^\infty}\le\|u\|_{H_2}$, hence $I_1\le C E^\frac{1}{2} \mathcal D(t)$.
Next, on integrating by part, we get
\begin{align*}
I_2 & =-\int_{\mathbb R} p^{\prime \prime} (v) v_x \partial_x^\alpha u \partial_x^\alpha v \dif x 
\le \|p^{\prime \prime}(v) v_x\|_{L^\infty} ( \|\partial_x^\alpha u \|_{L^2}^2 + \|\partial_x^\alpha v \|_{L^2}^2 )
\le C E^\frac{1}{2}(t) \mathcal D(t)
\end{align*}
with Cauchy-Schwarz and Young inequalities, and $I_3=0$. Last, we estimate $I_4$. We have
\begin{align} 
\nonumber
\|f\|_{L^2}
& \le \|v_x\|_{L^\infty} \|\partial_x^\alpha (p^\prime(v))\|_{L^2} 
\le C E^\frac{1}{2}(t)\mathcal D^\frac{1}{2}(t) 
\\ 
\|g\|_{L^2} 
& \le \|\partial_x u\|_{L^\infty} \|\partial_x^\alpha \left(\frac{2a\tau S+\mu}{v} \right) \|_{L^2}
\le CE^\frac{1}{2}(t) \mathcal D^\frac{1}{2}(t)
\end{align}
by Moser-type inequalities. 
Therefore, using Cauchy-Schwarz and Young inequalities, with the bound on $\frac{v}{2a\tau S+\mu}$, one gets
$I_4\le CE^\frac{1}{2}(t) \mathcal D(t)$. Combining the above estimates, summing up $\alpha$ from $1$ to $2$, and integrating the result, we get \eqref{2.7} immediately. 
\end{proof}

Combining the  lemmas \ref{le2.3} and \ref{le2.4}, using the fact that $v\in(3/4,5/4)$ a.e., 
we get

\begin{lemma}\label{le2.5}
Fix $a\in[-1,1]$, $\mu>0$ and \mycor{$\in(0,\mu^2)$}$\tau\in(0,1)$.
Let $(v-1, u, S) \in C^0([0,T], H^2) \cap C^1([0, T], H^1)$ be a local solution given by Theorem \ref{th2.1} for some $T>0$,
with $\delta > E(T)$ small.\newline
There exists a constant $C$ independent of $T$, initial data and $\tau$ such that for all times $t\in[0,T]$
\begin{align}\label{2.10}
\|(v-1, u, \sqrt{\tau} S)(t,\cdot)\|_{H^2}+\int_0^t \|S\|_{H^2}^2 \dif t\le C(E_0+E^\frac{1}{2}(t)  \int_0^t \mathcal D(s)\dif s).
\end{align}
\end{lemma}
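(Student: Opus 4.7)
The plan is simply to add the zeroth-order energy identity of Lemma~\ref{le2.3} to the sum over $\alpha\in\{1,2\}$ of the higher-order identities of Lemma~\ref{le2.4}. Since both right-hand sides already have the form $C(E_0+E^{1/2}(t)\int_0^t\mathcal D(s)\dif s)$, the only real work is to verify that the left-hand side of the resulting combined inequality is bounded below (uniformly in $\tau\in(0,1)$) by a constant multiple of
\[
\|(v-1,u,\sqrt\tau S)(t,\cdot)\|_{H^2}^2 + \int_0^t\|S(s,\cdot)\|_{H^2}^2\dif s,
\]
which is the quantity that the statement \eqref{2.10} is meant to control (the norm on the left of \eqref{2.10} should be read as squared, in accordance with the quadratic right-hand side and with the convention recalled in the introduction).

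First I would treat the pointwise-in-time terms. The $u^2$ and $(\partial_x^\alpha u)^2$ parts require no manipulation. For the stress, I use the uniform bound $v\in(3/4,5/4)$ established from $E(T)<\delta$ to deduce
\[
\frac{\tau v}{2\mu}\,(\partial_x^\alpha S)^2 \ \ge\ \frac{3\tau}{8\mu}\,(\partial_x^\alpha S)^2,\qquad \alpha=0,1,2,
\]
which yields a lower bound proportional to $\tau\|S\|_{H^2}^2$. For the density-like variable, I use $-p'(v)=\gamma v^{-\gamma-1}\ge \gamma(5/4)^{-\gamma-1}$ to get
\[
-p'(v)(\partial_x^\alpha v)^2 \ \ge\ \gamma\,(5/4)^{-\gamma-1}(\partial_x^\alpha v)^2,\qquad \alpha=1,2,
\]
and I invoke \eqref{2.5} for the zeroth-order part, $\int_{\mathbb R}(v-1)^2\dif x\le c_0^{-1}\int_{\mathbb R}\bigl(\tfrac{v^{1-\gamma}-1}{\gamma-1}+v-1\bigr)\dif x$. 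Summing these, the pointwise-in-time part of the combined left-hand side dominates a constant multiple of $\|(v-1,u,\sqrt\tau S)(t,\cdot)\|_{H^2}^2$.

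For the dissipation term, I again use $v\ge 3/4$ and sum $\alpha\in\{0,1,2\}$:
\[
\sum_{\alpha=0}^{2}\int_0^t\!\!\int_{\mathbb R}\frac{v}{\mu}(\partial_x^\alpha S)^2\dif x\dif s \ \ge\ \frac{3}{4\mu}\int_0^t\|S(s,\cdot)\|_{H^2}^2\dif s,
\]
which supplies the time-integral term on the left of \eqref{2.10}. Absorbing all the (uniform) constants $\mu,\gamma,c_0$ into a single constant $C$ on the right, and recalling that the right-hand sides of Lemmas~\ref{le2.3} and \ref{le2.4} are already of the required form, I obtain \eqref{2.10}.

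I do not expect a genuine obstacle here beyond careful bookkeeping: every lower-bounding constant above is independent of $\tau\in(0,1)$ (the bound $v\in(3/4,5/4)$ comes from $E(T)<\delta$, and $\delta$ was chosen independently of $\tau$), so the final constant $C$ in \eqref{2.10} is indeed uniform in $\tau$. The only mild subtlety is to keep the $\sqrt\tau$ weight on $S$ consistent across the $L^\infty_t$ part (where it appears) and the $L^2_t$ dissipation part (where it is absent, being supplied by the relaxation term rather than the time derivative), which the splitting above handles automatically.
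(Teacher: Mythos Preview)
Your proposal is correct and follows exactly the paper's approach: the paper itself merely states that Lemma~\ref{le2.5} follows by combining Lemmas~\ref{le2.3} and~\ref{le2.4} together with the fact that $v\in(3/4,5/4)$ a.e., and your write-up fills in precisely those lower bounds (via $v\ge 3/4$, $-p'(v)\ge\gamma(5/4)^{-\gamma-1}$, and \eqref{2.5}) in the straightforward way. Your remark that the $H^2$ norm on the left of \eqref{2.10} should be read as squared is also correct and consistent with how the estimate is subsequently used.
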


It remains to show the dissipative estimates of $v$ and $u$. We have the following lemma

\begin{lemma}\label{le2.6}
Fix $a\in[-1,1]$, $\mu>0$ and \mycor{$\in(0,\mu^2)$}$\tau\in(0,1)$.
Let $(v-1, u, S) \in C^0([0,T], H^2) \cap C^1([0, T], H^1)$ be a local solution given by Theorem \ref{th2.1} for some $T>0$,
with $\delta > E(T)$ small.\newline
There exists a constant $C$ independent of $T$, initial data and $\tau$ such that for all times $t\in[0,T]$
\begin{align}\label{2.11}
\int_0^t \|(v_x, u_x)(s,\cdot)\|_{H^1}^2 \dif s \le C (E_0+E^\frac{1}{2}(t)  \int_0^t \mathcal D(s)\dif s).
\end{align}
\end{lemma}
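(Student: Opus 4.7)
The plan is to derive two coupled estimates, one for $\int_0^t\|u_x\|_{L^2}^2\,\dif s$ via the constitutive law and one for $\int_0^t\|v_x\|_{L^2}^2\,\dif s$ via the momentum equation, and then close them by absorbing cross-terms, using Lemma~\ref{le2.5} to control every $\|S\|_{H^1}^2$-contribution that arises.

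For the $u_x$ dissipation I multiply $\eqref{1.3}_3$ by $u_x$ and integrate in $x$, getting $\int\frac{\mu+2a\tau S}{v}u_x^2\,\dif x=\int S u_x\,\dif x+\tau\int S_t u_x\,\dif x$, whose left side dominates $\frac{2\mu}{5}\|u_x\|_{L^2}^2$ by the a priori bounds $v\in(\tfrac34,\tfrac54)$ and $\mu+2a\tau S\in(\tfrac\mu2,\tfrac{3\mu}2)$ available from $E(T)<\delta$ small. The $\int S u_x$ term is split by Young's inequality. For the $\tau\int_0^t\int S_t u_x\,\dif x\,\dif s$ term I integrate by parts in $s$, then use $u_{xs}=(u_s)_x=S_{xx}-p(v)_{xx}$ from $\eqref{1.3}_2$ and one further integration by parts in $x$, producing: a boundary piece $\tau[\int S u_x\,\dif x]_0^t$ bounded by $C(E_0+E(t))$ (where the $\sqrt\tau$-scaling of $S$ inside $E$ is essential), a good term $\tau\int_0^t\|S_x\|_{L^2}^2\,\dif s$ controlled by Lemma~\ref{le2.5}, and a cross-term $-\tau\int_0^t\int p'(v) v_x S_x$ that Young's inequality with free weight $\epsilon$ splits as $\epsilon\int_0^t\|v_x\|^2\,\dif s+C_\epsilon\tau^2\int_0^t\|S_x\|^2\,\dif s$. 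After absorbing, this yields
\begin{equation}\label{eq:plan-ux}
\int_0^t\|u_x\|_{L^2}^2\,\dif s\le C(E_0+E(t))+C_\epsilon\int_0^t\|S\|_{H^1}^2\,\dif s+\epsilon\int_0^t\|v_x\|_{L^2}^2\,\dif s.
\end{equation}

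For the $v_x$ dissipation I multiply $\eqref{1.3}_2$ by $p(v)_x=p'(v)v_x$. Using $v_t=u_x$ together with integration by parts in $x$ produces the cancellation $\int u\,\partial_t(p'(v)v_x)\,\dif x=-\int p'(v) u_x^2\,\dif x$, so that $\frac{\dif}{\dif t}\int u\,p'(v)v_x\,\dif x+\int(p'(v))^2 v_x^2\,\dif x=\int|p'(v)|u_x^2\,\dif x+\int p'(v) v_x S_x\,\dif x$. The lower bound $(p'(v))^2\ge c_0^2>0$, inherited from the control of $v$, combined with a Young's inequality on the $p'(v) v_x S_x$ term, gives after integration in time
\begin{equation}\label{eq:plan-vx}
\int_0^t\|v_x\|_{L^2}^2\,\dif s\le C(E_0+E(t))+C\int_0^t\|u_x\|_{L^2}^2\,\dif s+C\int_0^t\|S_x\|_{L^2}^2\,\dif s.
\end{equation}
Substituting \eqref{eq:plan-vx} into \eqref{eq:plan-ux} and choosing $\epsilon$ so small that $\epsilon C<\tfrac12$ absorbs the $\|u_x\|^2$ feedback; Lemma~\ref{le2.5} and the bound $E(t)\le C(E_0+E^{1/2}(t)\int_0^t\mathcal D)$ then yield $\int_0^t\|u_x\|^2\le C(E_0+E^{1/2}(t)\int_0^t\mathcal D)$, and plugging back into \eqref{eq:plan-vx} gives the same for $\int_0^t\|v_x\|^2$.

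The second-order bounds $\int_0^t\|u_{xx}\|^2$ and $\int_0^t\|v_{xx}\|^2$ are obtained by applying $\partial_x$ to $\eqref{1.3}_2$ and $\eqref{1.3}_3$ and repeating the two steps with $u_{xx}$, $v_{xx}$ in place of $u_x$, $v_x$; the extra commutator terms coming from $\partial_x(\frac{2a\tau S+\mu}{v}u_x)$ and $\partial_x(p'(v)v_x)$ are quadratic in $(v-1,u,S)$ and their $H^2$-derivatives, hence bounded by $CE^{1/2}(t)\mathcal D(t)$ via Moser-type inequalities exactly as in Lemma~\ref{le2.4}. Summing the first- and second-order estimates delivers \eqref{2.11}. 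The main obstacle is arranging the closure between \eqref{eq:plan-ux} and \eqref{eq:plan-vx}: the naive coupling constant on $\int_0^t\|u_x\|^2$ in \eqref{eq:plan-vx} is $O(\|p'\|_{\infty}/c_0^2)$, which is $\ge 1$, so closure would fail without the $\epsilon$-flexibility in \eqref{eq:plan-ux}; it is precisely the integration by parts in time on $\tau\int S_t u_x$ that produces this free $\epsilon$, by transferring the time derivative off $S_t$ at the cost of only a harmless $\tau^2\|S_x\|^2$ term.
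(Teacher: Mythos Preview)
Your proposal is correct and follows essentially the same route as the paper: multiply $\eqref{1.3}_3$ (and its $x$-derivative) by $u_x$ (resp.\ $u_{xx}$) and integrate by parts in time on the $\tau S_t$ term using $\eqref{1.3}_2$; multiply $\eqref{1.3}_2$ (and its $x$-derivative) by $v_x$-type multipliers and integrate by parts in time using $\eqref{1.3}_1$; then close the coupled estimates via a Young inequality with small parameter and Lemma~\ref{le2.5}. The only cosmetic differences are that the paper multiplies $\eqref{1.3}_2$ by $\partial_x^{\beta+1}v$ rather than $p'(v)\partial_x^{\beta+1}v$, and handles the $\tau S\cdot p(v)_{xx}$ cross-term without the extra $x$-integration by parts you perform; both lead to the same closure.
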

\begin{proof}
Take $\partial_x^\beta$ on equation $\eqref{1.3}_3$ 
with $\beta=0, 1$, multiply the result by $\partial_x^{\beta+1}u$ and integrate over $(0,t)\times \mathbb R\ni(s,x)$, 
then one obtains
\begin{align*}
&\int_0^t\int_{\mathbb R} \frac{2a\tau S+\mu}{v} (\partial_x^{\beta+1} u)^2\dif x \dif s
=\int_0^t \int_{\mathbb R} \partial_x^\beta(\tau S_t) \partial_x^{\beta+1} u\  \dif x \dif s \\
&\quad + \int_0^t \int_{\mathbb R} \left( \partial_x^\beta S  + \frac{2a\tau S+\mu}{v} (\partial_x^{\beta+1} u)
- \partial_x^{\beta}\left(\frac{2a\tau S+\mu}{v} u_x\right) \right)\partial_x^{\beta+1} u\ \dif x \dif s 
=:J_1+J_2.
\end{align*} 
Using Young inequality and \eqref{2.10} for the first term below after integration by part,
then equation $\eqref{1.3}_2$, 
$\|\tau\partial_x^\beta S\|_{L^\infty}\le\|\tau S\|_{H^2}$,
\eqref{2.10} and $\|\partial_x^\beta v_x\|_{L^\infty}\le\|v-1\|_{H^2}$,
\mycor{while $\tau\in(0,\mu^2)$ is bounded above}
it holds
\begin{align*}
J_1
&=\int_0^t \frac{\dif}{\dif t} \int_{\mathbb R} \tau (\partial_x^\beta S) \partial_x^{\beta+1} u\ \dif x \dif s
-\int_0^t \int_{\mathbb R} \tau ( \partial_x^\beta S ) \partial_x^{\beta+1} u_t\ \dif x \dif s
\\
&=[ \int_{\mathbb R} \tau (\partial_x^\beta S) \partial_x^{\beta+1} u\ \dif x]^t_0
- \int_0^t \int_{\mathbb R} \tau ( \partial_x^\beta S ) \partial_x^{\beta+1} (-p(v)_x+S_x) \dif x \dif s
\\
&\le C (E_0+E^\frac{1}{2}(t) \int_0^t \mathcal D(s)\dif s)
+ \int_0^t \int_{\mathbb R} \tau |\partial_x^{2\beta} S| (|p^\prime(v)||v_{xx}|+|p^{\prime\prime}(v)||v_x|^2) \dif x \dif s
+\int_0^t \int_{\mathbb R} \tau |\partial_x^{\beta+1} S|^2
\\
&\le C \left( E_0+E^\frac{1}{2}(t) \int_0^t \mathcal D(s)\dif s 
+ \int_0^t \int_{\mathbb R} |\partial_x^{2\beta} S| (|v_x|+|v_{xx}|) \dif x \dif s \right)
\end{align*}
which 
yields using Young inequality $2rq\le\frac{r^2}{\nu^2}+{q^2\nu^2}$, Cauchy-Schwarz 
and \eqref{2.10}
$$
J_1 \le (C+\frac1{\nu^2})( E_0+E^\frac{1}{2}(t) \int_0^t \mathcal D(s)\dif s ) + \nu^2 \int_0^t \|v_x(s,\cdot)\|_{H^1}^2 \dif s \,.
$$
As for $J_2$, using Young inequality $2rq\le\frac{r^2}{\nu^2}+{q^2\nu^2}$, Cauchy-Schwarz 
and \eqref{2.10}, we have
\begin{align*}
J_2
& = \int_0^t \int_{\mathbb R} S u_x\ \dif x \dif s
\\
& \le \frac{C}{\nu^2}( E_0+E^\frac{1}{2}(t) \int_0^t \mathcal D(s)\dif s ) + \nu^2 \int_0^t \|u_x(s,\cdot)\|_{L^2}^2 \dif s
\end{align*}
when $\beta=0$, and when $\beta=1$ using moreover $\|(v_x,\sqrt\tau S_x)(t,\cdot)\|_{L^\infty}\le E(t)^{\frac12}$
\begin{align*}
J_2
& =\int_0^t \int_{\mathbb R} \left( \frac{2a\tau S_x v-(2a\tau S +\mu)v_x}{v^2} u_x + S_x \right)u_{xx}\
\dif x \dif s
\\
& \le (C+\frac1{\nu^2})( E_0+E^\frac{1}{2}(t) \int_0^t \mathcal D(s)\dif s ) + \nu^2 \int_0^t \|u_{xx}(s,\cdot)\|_{L^2}^2 \dif s \,.
\end{align*}
Summing up the above estimates for $\beta$ from $0$ to $1$, recalling $\frac{2a\tau S+\mu}{v}$ is bounded below, we have
\begin{align}\label{2.12}
(1-\nu^2)\int_0^t \|u_x(s,\cdot)\|_{H^1}^2 \dif s 
\le C ( E_0+E^\frac{1}{2}(t)  \int_0^t \mathcal D(s) \dif s
 + \nu^2 
 \int_0^t \mathcal \|v_x(s,\cdot)\|_{H^1}^2 \dif s )
\end{align}
for $\nu\ll1$. 
Now, take $\partial_x^\beta$ on equation $\eqref{1.3}_2$ with $\beta=0, 1$, multiply the result by $\partial_x^{\beta+1} v$, it yields
\begin{align*}
\int_0^t \int_{\mathbb R} -p^\prime(v) (\partial_x^{\beta+1} v)^2 \dif x \dif s
& =\int_0^t \int_{\mathbb R} \left(\partial_x^\beta u_t \cdot \partial_x^{\beta+1}v- \partial_x^{\beta+1} S \cdot \partial_x^{\beta+1} v + \partial_x^\beta(p^\prime(v))v_x 
\partial_x^{\beta+1} v\right) 
\dif x \dif s\\
&=:K_1+K_2+K_3.
\end{align*}
Using Young inequality and \eqref{2.10} for the first term below after integration by part, then $\eqref{1.3}_1$ and another integration by part, we have
\begin{align*}
K_1=\int_0^t \frac{\dif }{\dif t} \int_{\mathbb R} \partial_x^\beta u \partial_x^{\beta+1} v \dif x \dif s-\int_0^t \int_{\mathbb R} \partial_x^\beta u \cdot \partial_x^{\beta+1} v_t \dif x \dif s \\
\le C (E_0+E^\frac{1}{2}(t)  \int_0^t \mathcal D(s)\dif s)+\int_0^t \int_{\mathbb R} (\partial_x^{\beta+1} u)^2\dif x \dif s
\end{align*}
while Young inequality $2rq\le\frac{r^2}{\nu^2}+{q^2\nu^2}$ and \eqref{2.10} yield
\begin{align*}
K_2\le \frac{C}{\nu^2} (E_0+E^\frac{1}{2}(t)  \int_0^t \mathcal D(s)\dif s) + \nu^2 \int_0^t \|\partial_x^{\beta+1} v (s,\cdot)\|_{L^2}^2 \dif s \,.
\end{align*}
Last, $K_3=0$ if $\beta=0$, while using Cauchy-Schwarz inequality with $\|p'(v)v_x\|_{L^\infty}\le C E^{\frac12}$
if $\beta=1$ 
\begin{align*}
K_3 = \int_0^t \int_{\mathbb R}p^{\prime\prime}(v) v_x^2 v_{xx}\dif x \dif s 
\le CE^\frac{1}{2}(t) \int_0^t \mathcal D(s)\dif s
\end{align*}
which yields, after combination with the above estimates for $K_1$ and $K_2$ summed up for $\beta\in\{0,1\}$
\begin{align}\label{2.13}
\int_0^t \|v_x(s,\cdot)\|_{H^1}^2 \dif s \le C (E_0+E^\frac{1}{2}(t)  \int_0^t \mathcal D(s)\dif s+ \int_0^t \|u_x\|_{H^1}^2\dif t) \,.
\end{align}
Combining \eqref{2.12} and \eqref{2.13} with $\nu^2$ small enough successively yields 
\begin{align*}
\int_0^t \|v_x(s,\cdot)\|_{H^1}^2 \dif s \le C (E_0+E^\frac{1}{2}(t) \int_0^t \mathcal D(s)\dif s)
\end{align*}
and
\begin{align*}
\int_0^t \|u_x(s,\cdot)\|_{H^1}^2 \dif s \le C (E_0+E^\frac{1}{2}(t) \int_0^t \mathcal D(s)\dif s)
\end{align*}
which is exactly \eqref{2.11}: the proof of Lemma~\ref{le2.6} is finished.
\end{proof}

Combining  lemmas \ref{le2.3}-\ref{le2.6}, the proof of Proposition \ref{pro2.2} is finished.

\section{Proof of main theorems}
\label{sec:proof}

In this section, we first prove Theorem \ref{th1.1} by the usual bootstrap (equiv. continuation) method 
and we next prove Theorem \ref{th1.2} by compactness arguments.

{\bf Proof of Theorem \ref{th1.1}}:
First, $\delta$ being the constant defined in \eqref{2.3}, choose $\epsilon\in(0,\delta)$ small enough such that 
$$C \epsilon^\frac{1}{2}< \frac{1}{2},$$
where $C$ is the constant in \eqref{2.4}. 
Then there exists $\epsilon_0\in(0,\epsilon)$ such that, for 
some $T>0$, whatever the initial value satisfying $E(0)\equiv E_0\le \epsilon_0$,
the problem \eqref{1.3}--\eqref{1.4} has a unique local solution 
$(v-1, u, S) \in C^0([0,T], H^2) \cap C^1([0, T], H^1)$ satisfying 
$$ E(T)\le \epsilon$$ 
and, according to Proposition \ref{pro2.2}, for all $t\in[0,T]$
\begin{align}
E(t)+\frac12\int_0^t \mathcal D(s)\dif s \le C E_0.
\end{align}
Now, having fixed $\epsilon$, if one furthermore requires 
$$C \epsilon_0<\frac{1}{2} \epsilon$$
where $C$ is the constant in \eqref{2.4} then, according to Proposition \ref{pro2.2}, 
it holds in fact for all $t\in[0,T]$
$$ E(t)\le\frac{\epsilon}{2} $$
which implies that one can continue \emph{infinitely in time} the local solutions with initial condition satisfying $E(0)\equiv E_0\le \epsilon_0$. 
The latter global-in-time solutions are unique 
and satisfy for all $t\ge0$
\begin{align}
E(t)+\int_0^t \mathcal D(s)\dif s \le 2C E_0
\end{align}
which finishes the proof of Theorem \ref{th1.1}. 

{\bf Proof of Theorem \ref{th1.2}}: Let $(v^\tau, u^\tau, S^\tau)$ be global solutions obtained in Theorem \ref{th1.1}, so
\begin{align}\label{3.3}
\sup_{0\le t <\infty}\|(v^\tau-1, u^\tau, \sqrt{\tau}S^\tau)(t, \cdot)\|_{H^2}^2+ \int_0^\infty \left( \|(v^\tau_x, u^\tau_x)(t,\cdot)\|_{H^1}^2+ \|S^\tau(t,\cdot)\|_{H^2}^2 \right) \dif t \le C_0 E_0,
\end{align}
holds with $C_0$ a constant independent of $\tau$. 
Thus, there exists $(v^0, u^0)\in L^\infty((0, \infty), H^2)$ and $S^0\in L^2((0, \infty), H^2)$ such that
\begin{align*}
(v^\tau, u^\tau) \rightharpoonup (v^0,  u^0)\quad \mathrm{weakly-}\ast \, \mathrm{in}\quad L^\infty((0, \infty), H^2),\\
S^\tau  \rightharpoonup S^0\quad \mathrm{weakly} \, \mathrm{in}\quad L^2((0, \infty), H^2).
\end{align*}

Using \eqref{3.3} and $v\in(\frac34,\frac54)$, 
note also for any $T>0$ that $\partial_t v^\tau=u^\tau_x$ and $\partial_t u^\tau=S^\tau_x -p'(v^\tau)v^\tau_x$ are bounded in $L^2((0,T), H^1)$ uniformly whatever $\tau\in(0,1)$,
which implies $(v^0, u^0)\in C([0, T], H^1)$ 
and the fact that $(v^\tau, u^\tau)$ are in fact relatively compact in $C([0, T], H^{2-\delta_0}_{loc})$ for any $\delta_0\in(0,1)$ 
using e.g. Simon-Lions-Aubin theorem \cite{simon1986} with the compact embedding $H^2_{loc}\subset\subset H^{2-\delta_0}_{loc}$ 
so it holds
\begin{align*}
(v^\tau, u^\tau)\rightarrow (v^0, u^0) \quad \mathrm{strongly}\, \mathrm{in}\, C([0, T], H^{2-\delta_0}_{loc})
\end{align*}
as $\tau\rightarrow 0$ and up to subsequences.

One can now let $\tau\to0$ in \eqref{1.3}$_1$ as an identity in $L^2$ for all $t\in(0,T)$,
but in \eqref{1.3}$_2$ only as an identity in $L^2(0,T;L^2)$ 
while it remains to identify $S_x^0$.
This can be done letting $\tau\to0$ in \eqref{1.3}$_3$ as an identity in $\mathcal D^\prime((0,\infty), H^1)$.
The uniform boundedness of $\sqrt{\tau}S^\tau$ yields $\tau S^\tau \rightarrow 0$ in $L^\infty((0,\infty), H^2)$ as $\tau \rightarrow 0$, 
then 
$\tau \partial_t S^\tau \rightarrow 0$ in $\mathcal D^\prime((0,\infty), H^2)$ as $\tau\rightarrow 0$.
Recalling $v$ is uniformly bounded 
one finally obtains in $L^2(0,T;H^1)$ 
\begin{align}
S^0=\frac{\mu u^0_x}{v^0} 
\end{align}
which finishes the proof on noting $\frac{\mu u^0_x}{v^0}\in L^\infty(0,T;H^1)$ for any $T>0$, when $\mu>0$ is a constant.

Let us conclude by considering the case when $\mu(\rho;\tau)=2 G\tau \rho$ is a functional in $\rho$ parametrized by $\tau$, while $a=\frac12$ so $\eqref{1.1}_3$ is a 1-d version without pressure of the model recently proposed in \cite{Boyaval2021} to extend to viscoelasticity a standard symmetric-hyperbolic system 
for the large-strain mechanics 
of compressible Neo-Hookean materials. Although one has to be careful to the fact that $\mu$ is no more a constant e.g. in estimates like \eqref{2.6}, all the Propositions of Section~\ref{sec:local} still hold insofar as
$$\frac{3}{4}\le \|v\|_{L^\infty}\le \frac{5}{4}.$$
Note in particular that $\frac{v\tau}{2\tau a S +\mu} = \frac1{G(Av+1/v)}>0$ naturally holds when the stress is equivalently defined as a functional $S=G\left(\frac{A}\rho-\rho\right)$ parametrized by an elastic modulus $G>0$ 
of $\rho$ and $A>0$ solution (in Eulerian coordinates) to
$$
\tau(A_t+ u A_x)+A=\rho^2 \,.
$$
Finally, Theorem~\ref{th1.2} also holds on requiring $G\to\infty$ such that $G\tau\to\bar\mu>0$ at the same time as $\tau\to0$: the limit is then \eqref{CNS} with a viscosity $\bar\mu\rho$.

{\bf Acknowledgement:} Yuxi Hu's Research is supported by the Fundamental Research Funds for the Central Universities (No. 2023ZKPYLX01).

\end{document}